\documentclass[11pt]{amsart}%
\usepackage{amsmath}
\usepackage{amssymb}
\usepackage{amsthm}
\usepackage[mathscr]{eucal}
\usepackage{mathrsfs}
\usepackage{psfrag}
\usepackage{fullpage}
\usepackage{color}
\usepackage{eucal}
\usepackage[dvips]{graphicx}
\usepackage{amsfonts}%
\usepackage{amsaddr}
\setcounter{MaxMatrixCols}{30}
%TCIDATA{OutputFilter=latex2.dll}
%TCIDATA{Version=5.50.0.2953}
%TCIDATA{LastRevised=Wednesday, November 17, 2010 10:41:52}
%TCIDATA{<META NAME="GraphicsSave" CONTENT="32">}
%TCIDATA{<META NAME="SaveForMode" CONTENT="1">}
%TCIDATA{BibliographyScheme=Manual}
%TCIDATA{Language=American English}
%BeginMSIPreambleData
\providecommand{\U}[1]{\protect\rule{.1in}{.1in}}
%EndMSIPreambleData
\newtheorem{proposition}{Proposition}[section]
\newtheorem{theorem}[proposition]{Theorem}

\newtheorem{definition}[proposition]{Definition}
\newtheorem{remark}[proposition]{Remark}

\newtheorem{condition}[proposition]{Condition}

\numberwithin{equation}{section}
\numberwithin{proposition}{section}

\begin{document}
\title{Non-asymptotic  performance analysis of importance sampling schemes for small noise diffusions}
\author{Konstantinos Spiliopoulos}
\address{Department of Mathematics and Statistics, Boston University, Boston, MA 02215}
\email{kspiliop@math.bu.edu}
\date{\today}
\maketitle

\begin{abstract}
In this note we develop a prelimit analysis of performance measures for importance sampling schemes related to small noise diffusion processes. In importance sampling the performance of any change of measure
 is characterized by its second moment. For a given change of measure, we characterize the second moment of the corresponding estimator as the solution to a PDE, which we analyze via a full asymptotic expansion with respect to the size of the noise and obtain a precise statement on its accuracy.
The main correction term to the decay rate of the second moment solves a transport equation that can be solved explicitly. The asymptotic expansion that we obtain identifies the source of possible poor performance of nevertheless asymptotically optimal importance sampling schemes and allows for more accurate comparison among competing importance sampling schemes.
\end{abstract}

\textbf{Keywords}: importance sampling, Monte Carlo, large deviations, asymptotic expansion.

\textbf{AMS}: 60F05, 60F10, 60G60

\section{Introduction}
Consider a $d$-dimensional  process $X^{\epsilon}\doteq \{X^{\epsilon}(s), t\leq s\leq T\}$ satisfying the stochastic differential equation (SDE)
\begin{equation}
dX^{\epsilon}(s)=b\left(X^{\epsilon}(s)\right)ds+\sqrt{\epsilon}\sigma\left(X^{\epsilon}(s)\right)dW(s),
\hspace{0.2cm} X(t)=x\label{Eq:Diffusion1}
\end{equation}
where $0<\epsilon\ll 1$ and $W(s)$ is a standard $d$-dimensional Wiener process.

Assume that we are interested in estimating a quantity like
\begin{equation}
\mathbb{E}_{t,x}\left[e^{-\frac{1}{\epsilon}h\left(X^{\epsilon}(T)\right)}\right]\qquad \textrm{ or }\qquad\mathbb{P}_{t,x}\left[X^{\epsilon}(T)\notin D\right]
\end{equation}
where $h(x):\mathbb{R}^{d}\mapsto\mathbb{R}$ is a given nonnegative, bounded and continuous function and $D$ an appropriate set.

It is well known that standard Monte Carlo methods perform poorly in the small noise limit in that the relative errors under a fixed computational cost grow rapidly as the event becomes more rare. A popular variance reduction technique in Monte Carlo simulation that addresses this problem is importance sampling. The simulation is done under an alternative probability measure, absolutely continuous with respect to the original one, such that the variance of the resulting unbiased estimator (under the new probability measure) is as small as possible in the limit as $\epsilon\downarrow 0$. Due to unbiasedness, minimizing the variance is equivalent to minimizing the second moment of the estimator.

The asymptotically  optimal change of measure is related to the gradient of the solution to a Hamilton-Jacobi-Bellman (HJB) equation. Under conditions, the gradient of the system exists in the classical sense and is continuous, and then one can guarantee asymptotic optimality. The authors in \cite{WeareVandenEijden2012} consider importance sampling schemes that use as change of measure the gradient of the associated HJB equation and prove their asymptotic optimality in regions of strong regularity (see Definition \ref{Def:RSR}), under the assumption that the gradient of the solution to the HJB equation is continuously differentiable.  However, such smoothness of the solution to the HJB equation is
usually not the case, as for example in the case of computing exit or hitting probabilities, see \cite{DupuisSpiliopoulosZhou}. Also, even if the problem is such that the gradient of the solution is smooth, it may be difficult or actually impossible to reliably compute it in practice, even numerically. As the numerical results of \cite{DupuisSpiliopoulosZhou} demonstrate, this is certainly the case for points that are not in regions of strong regularity.

For these reasons, a theory
has been developed in \cite{DupuisWang,DupuisWang2,DupuisSpiliopoulosWang}  that gives precise bounds on asymptotic performance for schemes that are based on subsolutions to this HJB equation
rather than to the solution itself; see also \cite{BlanchetGlynnLeder2012} for the closely related concept of the associated Lyapunov inequalities, that can be also used to find bounds of the performance measures in the prelimit. Such results have been used to test the efficiency of state-dependent importance sampling schemes  in the heavy tail and discrete setting, e.g., \cite{Blanchet2009,BlanchetGlynn2008,BlanchetGlynnLiu2007, DupuisWang2}. Counterexamples in the discrete, non-dynamic, setting, where seemingly reasonable importance sampling schemes perform poorly in practice can be found in \cite{glakou,glawan}. In the continuous, dynamic setting, similar issues for metastable problems  were investigated in \cite{DupuisSpiliopoulosZhou}.

The goal of this paper is to characterize the performance of a given importance sampling scheme (not necessarily associated with a subsolution of the related HJB equation) for small but fixed $\epsilon>0$. For a given change of measure,
 we analyze the second moment of the estimator under the alternative probability measure through an asymptotic expansion in terms of $\epsilon$. Under regularity
conditions, we obtain the main correction terms in the logarithmic asymptotics of the second moment for a given change of measure. This expansion characterizes the performance of
an importance sampling algorithm for a small but fixed $\epsilon>0$.

The key contribution of this note is the characterization of the second moment of an importance sampling estimator as the solution to a PDE and the accuracy of the approximation of its solution by the corresponding asymptotic expansion, see Theorems \ref{T:CharacterizationPDE} and \ref{T:RSR}. The authors in \cite{FlemingJames1992,FlemingSouganidis1986} discuss convergence of solutions to the log transform of this type of PDEs  as $\epsilon\downarrow 0$,
 they characterize the limit of such equations as the viscosity solution to a certain PDE and discuss its smoothness properties. We refer the interested reader to those articles and to the references therein for an extensive discussion on these issues. In this note, we make the discussion specific for the particular problem of interest, which is the second moment of importance sampling schemes. We obtain representation formulas for the lower order correction terms in the asymptotic expansion of its solution. It turns out that the main correction term to the decay rate of the second moment solves a transport equation that can be solved explicitly.

The analysis of the leading order terms in the asymptotic expansion shows that for certain problems it may be possible to have relatively poor performance for moderately small values of $\epsilon$ even if
a change of measure with good asymptotic performance is being used. As we shall see, the main correction term to the logarithmic asymptotics is related to the trace of the second derivative matrix
of the limiting logarithmic asymptotics. If the trace of that second derivative matrix term takes large values or if the time horizon under consideration is large, it may be the case that the correction term
has significant contribution to the second moment for a fixed choice of $\epsilon$, yielding poor performance of the estimator. This could be useful in quantifying the question of when a specific choice of a subsolution is better than another one, for small but a fixed range of $\epsilon>0$.

A partial motivation for this work is the investigation of importance sampling schemes for problems with rest points in \cite{DupuisSpiliopoulosZhou}. The authors study there the problem of exit from an asymptotically stable equilibrium point of a dynamical system by given time $T$. It is found there that, nearly asymptotically optimal changes of measures may actually perform poorly, which is due to the presence of rest points (i.e., in that case, the stable equilibrium point).  In the  setting of \cite{DupuisSpiliopoulosZhou}, the authors also construct changes of measure that behave optimally both asymptotically and non-asymptotically.

In the current paper, the goal is to provide a non-asymptotic analysis of importance sampling schemes in a general setting.
 This is useful
 (a): in  obtaining non-asymptotic performance characterization,  (b): in providing a way to mathematically compare the performance of different competing changes of
measure for a specific problem, and (c): in  understanding why asymptotically optimal importance sampling schemes may actually perform poorly in practice.

Lastly, we would like to emphasize that the asymptotic series expansion that we obtain is not tied to importance sampling schemes that are based on the solution or on a subsolution of the  associated  HJB equation. The asymptotic expansion is valid for any ''sufficiently smooth" change of measure. If the change of measure happens to be related to a smooth subsolution of the associated HJB equation, then one can recover the well known asymptotically optimal characterization of the related importance sampling scheme.

The paper is organized as follows. In Section \ref{S:AsymptoticPerformance} we establish notation, recall the relevant large deviations results and results on asymptotically optimal importance
sampling for small noise diffusions.  In Section \ref{S:PrelimitBehavior}, we analyze the prelimit behavior of the second moment of an unbiased estimator based on a given change of
measure by performing an asymptotic expansion of the solution to the PDE that it satisfies in terms of $\epsilon$. In Section \ref{S:Limit behavior}, we show how the well known asymptotic performance can be recovered from the asymptotic analysis. In Section \ref{S:Example} we present an example where the issues addressed in this paper appear. Concluding remarks are in Section \ref{S:Conclusions}.

\section{Notation and asymptotically optimal change of measure}\label{S:AsymptoticPerformance}

In this section we introduce notation and review the related large deviations results for small noise diffusions. Moreover, we review the problem of importance sampling  in the context of small noise diffusions as well as the role of subsolutions in the construction of asymptotically optimal importance sampling schemes, Subsection \ref{SS:IS}.

First, we establish some notation.
Typically $(t,x)$ denotes a
specific starting time and initial state. With an abuse of notation
$(t,x)$ will also be used at times to denote a generic point in $[0,T]\times
\mathbb{R}^{d}$ (the intended use will be clear from context).

We define
\[
\|v\|_{B} \doteq\sqrt{v^{T} B v}%
\]
for any $v\in\mathbb{R}^{d}$ and symmetric positive definite matrix $B$. When
$B$ is the identity matrix, $\|v\|_{B}$ is just the standard Euclidean norm
$\|v\|$. Our main assumption is
\begin{condition}\label{A:MainAssumption}
\begin{enumerate}
\item{The drift $b(x)$ is bounded and Lipschitz continuous.}
\item{The coefficients $\sigma$ are bounded, Lipschitz continuous, uniformly nondegenerate with the inverse being bounded as well.}
\end{enumerate}
\end{condition}

The action functional associated with the process (\ref{Eq:Diffusion1}) takes the form, see \cite{FW1},
\begin{equation}
S_{tT}(\phi)\doteq
\begin{cases}
\int_{t}^{T}\frac{1}{2}\left\Vert \dot{\phi}(s)-b(\phi(s))\right\Vert^{2}_{a^{-1}(\phi(s))} ds, &\phi\in\mathcal{AC}([t,T])\textrm{ and }\phi(t)=x\\
+\infty, &\textrm{otherwise}
\end{cases}\label{Eq:RateFunction}
\end{equation}
where $a(x)=\sigma(x)\sigma^{T}(x)$ and  $\mathcal{AC}\left([t,T]\right)$ represents the set of absolutely continuous functions on $[t,T]$ with values in $\mathbb{R}^{d}$.

Given bounded and continuous function $h(x):\mathbb{R}^{d}\mapsto\mathbb{R}$, consider the problem of estimating

\begin{equation}
\theta(\epsilon;t,x)\doteq\mathbb{E}_{t,x}\left[e^{-\frac{1}{\epsilon}h\left(X^{\epsilon}(T)\right)}\right]\label{Eq:EstimationTargetSmooth1}
\end{equation}
where $\mathbb{E}$ is the expectation operator associated with the reference probability measure $\mathbb{P}$. Defining
\begin{equation}
G^{\epsilon}(t,x)\doteq-\epsilon\ln\theta(\epsilon;t,x)\label{Eq:EstimationTarget2}
\end{equation}
the following large deviations result is well known, e.g. \cite{FW1,FlemingSoner}.

\begin{theorem}\label{T:LDPTheorem}
Assume Condition \ref{A:MainAssumption}. Then  for each $(t,x)\in [0,T]\times\mathbb{R}^{d}$ the following statement holds
\begin{equation}
\lim_{\epsilon\downarrow 0}G^{\epsilon}(t,x)=G(t,x)\doteq\inf_{\phi\in\mathcal{C}([t,T]), \phi(t)=x}\left[S_{tT}(\phi)+h(\phi(T))\right]\label{Eq:LDPTh}
\end{equation}
\end{theorem}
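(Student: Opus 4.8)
The plan is to establish this as an application of the classical Freidlin–Wentzell theory combined with Varadhan's lemma (or equivalently the Laplace principle). First I would invoke the large deviation principle for the family $\{X^{\epsilon}\}$ on the path space $\mathcal{C}([t,T];\mathbb{R}^{d})$: under Condition \ref{A:MainAssumption}, the drift $b$ is bounded and Lipschitz and $a=\sigma\sigma^{T}$ is uniformly nondegenerate with bounded inverse, so the results of \cite{FW1} give that $X^{\epsilon}$ satisfies an LDP with speed $\epsilon$ and good rate function $S_{tT}$ as in \eqref{Eq:RateFunction}. The map $\phi\mapsto h(\phi(T))$ is bounded and continuous on path space (since $h$ is bounded and continuous on $\mathbb{R}^{d}$ and evaluation at $T$ is continuous), so Varadhan's lemma applies directly to $\theta(\epsilon;t,x)=\mathbb{E}_{t,x}[\exp(-\tfrac{1}{\epsilon}h(X^{\epsilon}(T)))]$ and yields
\[
\lim_{\epsilon\downarrow 0}\epsilon\ln\theta(\epsilon;t,x)=-\inf_{\phi\in\mathcal{C}([t,T]),\,\phi(t)=x}\bigl[S_{tT}(\phi)+h(\phi(T))\bigr],
\]
which is exactly \eqref{Eq:LDPTh} after multiplying by $-1$ and recalling the definition \eqref{Eq:EstimationTarget2} of $G^{\epsilon}$.

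Alternatively — and this is the route I would actually write out if the paper wants to stay self-contained — one proves the two bounds separately. For the upper bound on $G^{\epsilon}$ (lower bound on $\theta$), fix any absolutely continuous $\phi$ with $\phi(t)=x$ and $S_{tT}(\phi)<\infty$; by the LDP lower bound applied to a small tube around $\phi$ and continuity of $h$ at $\phi(T)$, one gets $\liminf_{\epsilon}\epsilon\ln\theta\geq-S_{tT}(\phi)-h(\phi(T))$, and optimizing over $\phi$ gives $\limsup_{\epsilon}G^{\epsilon}\leq G$. For the lower bound on $G^{\epsilon}$ (upper bound on $\theta$), one uses the LDP upper bound on compact sets together with the exponential tightness of $\{X^{\epsilon}\}$ (again from Condition \ref{A:MainAssumption}); covering the relevant compact set of terminal values by finitely many small balls and using boundedness and continuity of $h$, one obtains $\limsup_{\epsilon}\epsilon\ln\theta\leq-\inf_{\phi}[S_{tT}(\phi)+h(\phi(T))]$, hence $\liminf_{\epsilon}G^{\epsilon}\geq G$. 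The boundedness of $h$ is what makes the contribution of the tail of $X^{\epsilon}(T)$ negligible and lets one pass from compact sets to all of $\mathbb{R}^{d}$.

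The main technical point — the ``hard part'' — is verifying that the contraction/optimization is over the right class and that the infimum is attained or at least well-approximated, i.e. handling the interplay between the topology of uniform convergence on path space and the terminal functional $h$. Concretely, one must be careful that in the lower bound one can restrict to a compact set of terminal points: this follows from exponential tightness, which under the bounded-drift, nondegenerate-diffusion hypotheses is standard but should be cited. A second minor subtlety is that $S_{tT}$ with the finite-time horizon and fixed initial condition $\phi(t)=x$ is indeed a good rate function on $\mathcal{C}([t,T];\mathbb{R}^{d})$ (compact level sets), which again is part of the Freidlin–Wentzell package under Condition \ref{A:MainAssumption}. Given those ingredients, the rest is the routine two-sided Laplace-principle argument sketched above, and since the result is explicitly labeled ``well known'' with references to \cite{FW1,FlemingSoner}, the cleanest exposition is simply to cite Varadhan's lemma and those references rather than reproduce the proof.
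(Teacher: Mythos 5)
The paper gives no proof of this theorem at all---it is stated as ``well known'' with citations to \cite{FW1,FlemingSoner}---so there is nothing to compare against except the standard argument those references contain, which is exactly what you sketch. Your proposal is correct: the Freidlin--Wentzell LDP on $\mathcal{C}([t,T];\mathbb{R}^{d})$ with good rate function $S_{tT}$ plus Varadhan's lemma applied to the bounded continuous functional $\phi\mapsto -h(\phi(T))$ (the moment condition being trivial since $h\geq 0$ makes the integrand at most $1$) yields \eqref{Eq:LDPTh} directly, and your two-sided Laplace-principle version with exponential tightness is the self-contained variant of the same argument; citing the references, as the paper does, is the appropriate level of detail here.
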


Lastly, for notational convenience we define the
operator $\cdot:\cdot$, where for two matrices
$A=[a_{ij}],B=[b_{ij}]$
\[
A:B\doteq\sum_{i,j}a_{ij}b_{ij}.
\]

\subsection{Asymptotically optimal importance sampling}\label{SS:IS}
Let us briefly review now the problem of importance sampling for estimating $\theta(\epsilon)$ for a given function $h$. Consider $\Gamma^{\epsilon}(t,x)$ a random
variable defined on some probability space with probability measure $\bar{\mathbb{P}}$ such that
\[
\bar{\mathbb{E}}\Gamma^{\epsilon}(t,x)=\theta(\epsilon;t,x),
\]
where $\bar{\mathbb{E}}$ is the expectation operator associated with
$\bar{\mathbb{P}}$. In other words $\Gamma^{\epsilon}(t,x)$ is an unbiased estimator of $\theta(\epsilon;t,x)$.

In Monte Carlo simulation, one generates a number of independent copies of
$\Gamma^{\epsilon}(t,x)$ and the estimate is the sample mean. The specific
number of samples required depends on the desired accuracy, which is measured
by the variance of the sample mean. Clearly, since the samples are independent
it suffices to consider the variance of a single sample. Then
unbiasedness implies that, minimizing the variance is equivalent to minimizing the second
moment. By Jensen's inequality
\[
\bar{\mathbb{E}}(\Gamma^{\epsilon}(t,x))^{2}\geq(\bar{\mathbb{E}}%
\Gamma^{\epsilon}(t,x))^{2}=\theta(\epsilon)^{2}.
\]
The large deviations principle of  Theorem \ref{T:LDPTheorem} implies that
\[
\limsup_{\epsilon\rightarrow0}-\epsilon\log\bar{\mathbb{E}}(\Gamma^{\epsilon
}(t,x))^{2}\leq2G(t,x),
\]
and thus $2G(t,x)$ is the best possible rate of decay of the second moment.
If
\[
\liminf_{\epsilon\rightarrow0}-\epsilon\log\bar{\mathbb{E}}(\Gamma^{\epsilon
}(t,x))^{2}\geq2G(t,x),
\]
then $\Gamma^{\epsilon}(t,x)$ achieves this best decay rate, and is said to be
\textit{asymptotically optimal}.

The choices of unbiased estimators $\Gamma^{\epsilon}(t,x)$ that we shall consider are as follows. Consider $\bar{u}(s)$, a sufficiently smooth function such that the change of measure
\begin{equation}
\frac{d\bar{\mathbb{P}}^{\epsilon}}{d\mathbb{P}}=\exp\left\{  -\frac
{1}{2\epsilon}\int_{t}^{T}\left\Vert \bar{u}(s)\right\Vert
^{2}ds+\frac{1}{\sqrt{\epsilon}}\int_{t}^{T}\left\langle \bar{u}%
(s),dW(s)\right\rangle \right\},\label{Eq:ChangeOfMeasure}
\end{equation}
defines the family of probability measures $\bar{\mathbb{P}}^{\epsilon}$. Then, by Girsanov's Theorem
\[
\bar{W}(s)=W(s)-\frac{1}{\sqrt{\epsilon}}\int_{t}^{s}\bar{u}(\rho)d\rho,~~~t\leq s\leq T
\]
is a Brownian motion on $[t,T]$ under the probability measure $\bar
{\mathbb{P}}^{\epsilon}$, and $X^{\epsilon}$ satisfies $X^{\epsilon}(t)=x$
and
\[
dX^{\epsilon}(s)=\left[b\left(  X^{\epsilon}(s)\right) +\sigma\left(  X^{\epsilon
}(s)\right)\bar{u}(s) \right]ds+\sqrt{\epsilon}\sigma\left(  X^{\epsilon
}(s)\right) d\bar{W}(s)
\]

Letting
\[
\Gamma^{\epsilon}(t,x)=\exp\left\{  -\frac{1}{\epsilon}h\left(X^{\epsilon}(T)\right)\right\}  \frac{d\mathbb{P}}{d\bar{\mathbb{P}}^{\epsilon}}(X^{\epsilon}),
\]
it follows easily that under $\bar{\mathbb{P}}^{\epsilon}$, $\Gamma^{\epsilon
}(t,x)$ is an unbiased estimator for $\theta(\epsilon)$. The performance of
this estimator is characterized by the decay rate of its second moment
\begin{equation}
Q^{\epsilon}(t,x;\bar{u})\doteq\bar{\mathbb{E}}^{\epsilon}\left[  \exp\left\{
-\frac{2}{\epsilon}h\left(X^{\epsilon}(T)\right)\right\}  \left(  \frac{d\mathbb{P}%
}{d\bar{\mathbb{P}}^{\epsilon}}(X^{\epsilon})\right)  ^{2}\right]
.\label{Eq:2ndMoment1}%
\end{equation}

As it is demonstrated in \cite{DupuisWang2,DupuisSpiliopoulosWang} the asymptotically optimal change of measure is associated with subsolutions to a certain class of Hamilton-Jacobi-Bellman (HJB) equations of the type (\ref{Eq:HJBequation1})
\begin{eqnarray}
U_{t}(t,x)+\left\langle b(x),\nabla_{x} U(t,x)\right\rangle-\frac{1}{2}\left\Vert\sigma^{T}(x)\nabla_{x} U(t,x)\right\Vert^{2}&=&0, (t,x)\in[0,T)\times \mathbb{R}^{d}
\nonumber\\
U(T,x)&=&h(x),\text{ for } x\in \mathbb{R}^{d}\label{Eq:HJBequation1}
\end{eqnarray}
It can also be shown that if $G$ defined in Theorem \ref{T:LDPTheorem} is regular enough, then $G$ is actually the unique, bounded, continuous viscosity solution of (\ref{Eq:HJBequation1}), see \cite{FlemingSoner}.

Let us now recall the notion of subsolutions.

\begin{definition}
\label{Def:ClassicalSubsolution} A function $\bar{U}(t,x):[0,T]\times
\mathbb{R}^{d}\mapsto\mathbb{R}$ is a classical subsolution to the HJB
equation (\ref{Eq:HJBequation1}) if

\begin{enumerate}
\item $\bar{U}$ is continuously differentiable,

\item $\bar{U}_{t}(t,x)+H(x,\nabla_{x}\bar{U}(t,x))\geq 0$ for every
$(t,x)\in [0,T]\times \mathbb{R}^{d}$, and

\item $\bar{U}(T,x)\leq h(x)$ for $x\in\mathbb{R}^{d}$.
\end{enumerate}
\end{definition}

The use of subsolutions for importance sampling often leads to imposing stronger regularity conditions that go beyond those of Definition \ref{Def:ClassicalSubsolution}. For presentation purposes
we shall assume Condition \ref{A:MainAssumption2}, which however is by no means the most economical and can be replaced by milder conditions and further effort. Nevertheless, we shall assume it because it guarantees that the feedback control used in importance sampling
is uniformly bounded, which means that a number of technicalities are avoided.
\begin{condition}\label{A:MainAssumption2}
$\bar{U}(t,x)\in\mathcal{C}^{1,2}([0,T]\times\mathbb{R}^{d})$ and the first and second derivatives in $x$ are uniformly bounded.
\end{condition}

The connection between subsolutions and performance of importance sampling schemes has been established in several papers, such as \cite{DupuisWang2,DupuisSpiliopoulosWang}.
In the present setting, we have the following Theorem regarding asymptotic optimality (Theorem 4.1 in \cite{DupuisSpiliopoulosWang}).

\begin{theorem}
\label{T:UniformlyLogEfficientRegime1}  Let $\{X^{\epsilon},\epsilon>0\}$ be
the unique strong solution to (\ref{Eq:Diffusion1}). Consider a bounded and continuous
function $h:\mathbb{R}^{d}\mapsto\mathbb{R}$ and assume Condition
\ref{A:MainAssumption}. Let $\bar
{U}(t,x)$ be a subsolution according to Definition \ref{Def:ClassicalSubsolution} such that Condition \ref{A:MainAssumption2} holds and define the control
$\bar{u}(t,x)=-\sigma(x)^{T}\nabla_{x} \bar{U}(t,x)$. Then
\begin{equation}
G(t,x)+\bar{U}(t,x)\leq \liminf_{\epsilon\rightarrow0}-\epsilon\ln Q^{\epsilon}(t,x;\bar{u})\leq 2G(t,x).
 \label{Eq:GoalRegime1Subsolution}%
\end{equation}
\end{theorem}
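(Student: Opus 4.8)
The plan is to prove the two inequalities separately. The right-hand bound $\liminf_{\epsilon\downarrow0}\left(-\epsilon\ln Q^{\epsilon}(t,x;\bar u)\right)\leq 2G(t,x)$ is the easy half: since $\Gamma^{\epsilon}(t,x)$ is unbiased under $\bar{\mathbb{P}}^{\epsilon}$, Jensen's inequality gives $Q^{\epsilon}(t,x;\bar u)=\bar{\mathbb{E}}^{\epsilon}[(\Gamma^{\epsilon}(t,x))^{2}]\geq\theta(\epsilon;t,x)^{2}=e^{-2G^{\epsilon}(t,x)/\epsilon}$, so $-\epsilon\ln Q^{\epsilon}(t,x;\bar u)\leq 2G^{\epsilon}(t,x)$, and letting $\epsilon\downarrow0$ and invoking Theorem \ref{T:LDPTheorem} in fact gives $\limsup_{\epsilon\downarrow0}\left(-\epsilon\ln Q^{\epsilon}(t,x;\bar u)\right)\leq 2G(t,x)$.

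For the left-hand bound I would use a verification-type estimate. The first step is to rewrite $Q^{\epsilon}$ as an expectation under the reference measure $\mathbb{P}$: using $\left(d\mathbb{P}/d\bar{\mathbb{P}}^{\epsilon}\right)^{2}\left(d\bar{\mathbb{P}}^{\epsilon}/d\mathbb{P}\right)=d\mathbb{P}/d\bar{\mathbb{P}}^{\epsilon}$ one obtains
\[
Q^{\epsilon}(t,x;\bar u)=\mathbb{E}\!\left[\exp\!\left\{-\frac{2}{\epsilon}h(X^{\epsilon}(T))+\frac{1}{2\epsilon}\int_{t}^{T}\|\bar u(s)\|^{2}\,ds-\frac{1}{\sqrt{\epsilon}}\int_{t}^{T}\langle\bar u(s),dW(s)\rangle\right\}\right],
\]
where now $X^{\epsilon}$ is the solution of (\ref{Eq:Diffusion1}) and $\bar u(s)=-\sigma(X^{\epsilon}(s))^{T}\nabla_{x}\bar U(s,X^{\epsilon}(s))$. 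Every step is legitimate because Condition \ref{A:MainAssumption2} makes $\bar u$ uniformly bounded, so Girsanov's theorem applies (Novikov's condition is automatic) and all exponential moments below are finite. Next, apply It\^o's formula to $s\mapsto\bar U(s,X^{\epsilon}(s))$; this rewrites $\frac{1}{\sqrt{\epsilon}}\int_{t}^{T}\langle\sigma^{T}\nabla_{x}\bar U,dW\rangle$ in terms of the boundary difference $\bar U(T,X^{\epsilon}(T))-\bar U(t,x)$ and Lebesgue integrals of $\bar U_{t}$, $\langle b,\nabla_{x}\bar U\rangle$ and $a:\nabla_{x}^{2}\bar U$ evaluated along the path. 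Substituting into the exponent and collecting terms, it becomes
\[
-\frac{2}{\epsilon}h(X^{\epsilon}(T))+\frac{1}{\epsilon}\bar U(T,X^{\epsilon}(T))-\frac{1}{\epsilon}\bar U(t,x)-\frac{1}{\epsilon}\int_{t}^{T}\!\Big[\bar U_{t}+\langle b,\nabla_{x}\bar U\rangle-\tfrac12\|\sigma^{T}\nabla_{x}\bar U\|^{2}\Big]ds-\frac12\int_{t}^{T}\!a:\nabla_{x}^{2}\bar U\,ds.
\]
Now I would invoke the three defining properties of a subsolution from Definition \ref{Def:ClassicalSubsolution}: the differential inequality $\bar U_{t}+H(x,\nabla_{x}\bar U)\geq 0$ makes the fourth term nonpositive pathwise; the terminal inequality $\bar U(T,\cdot)\leq h$ gives $\tfrac1\epsilon\bar U(T,X^{\epsilon}(T))-\tfrac2\epsilon h(X^{\epsilon}(T))\leq-\tfrac1\epsilon h(X^{\epsilon}(T))$; and uniform boundedness of $a$ and of $\nabla_{x}^{2}\bar U$, together with finiteness of $T$, bounds the last ``trace'' term above by a deterministic constant $C_{0}$ independent of $\epsilon$ and $\omega$. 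Hence
\[
Q^{\epsilon}(t,x;\bar u)\leq e^{C_{0}}\,e^{-\bar U(t,x)/\epsilon}\,\mathbb{E}\!\left[e^{-h(X^{\epsilon}(T))/\epsilon}\right]=e^{C_{0}}\,e^{-(\bar U(t,x)+G^{\epsilon}(t,x))/\epsilon},
\]
so $-\epsilon\ln Q^{\epsilon}(t,x;\bar u)\geq-\epsilon C_{0}+\bar U(t,x)+G^{\epsilon}(t,x)$, and Theorem \ref{T:LDPTheorem} gives $\liminf_{\epsilon\downarrow0}\left(-\epsilon\ln Q^{\epsilon}(t,x;\bar u)\right)\geq\bar U(t,x)+G(t,x)$.

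I do not expect a deep obstacle: morally this is a supersolution/comparison argument carried out along the controlled dynamics. The two points that need care are (i) the sign bookkeeping when the It\^o expansion of $\bar U$ is inserted into the exponent and the drift terms are reorganized into $\bar U_{t}+H(x,\nabla_{x}\bar U)$, and (ii) verifying that the ``trace'' term $-\tfrac12\int_{t}^{T}a:\nabla_{x}^{2}\bar U\,ds$ is genuinely $O(1)$, so that it contributes only a multiplicative constant and drops out after multiplying by $\epsilon$ and passing to the limit --- it is exactly the finer, non-logarithmic behaviour of this term that the remainder of the paper analyzes. The boundedness built into Condition \ref{A:MainAssumption2} is what allows us to skip a localization of the exponential martingale; with only a classical subsolution one would first stop the path at exit times from large balls, prove the estimate there, and then pass to the limit, at the cost of extra but routine technicalities.
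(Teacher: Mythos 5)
Your argument is correct, and I verified the sign bookkeeping: with $H(x,p)=\langle b(x),p\rangle-\tfrac12\|\sigma^{T}(x)p\|^{2}$ from (\ref{Eq:HJBequation1}), the cancellation of the Girsanov stochastic integral against the martingale part of the It\^o expansion of $\bar U(s,X^{\epsilon}(s))$ is a pathwise (a.s.) identity, so no martingale expectation argument is even needed, and the three subsolution properties are used exactly where you say. Note, however, that the paper does not prove this theorem at all: it is quoted as Theorem 4.1 of \cite{DupuisSpiliopoulosWang}, where the proof goes through a stochastic control (relative entropy) representation of $-\epsilon\ln Q^{\epsilon}$ and weak convergence arguments; the closest thing to a proof in the present paper is the Section \ref{S:Limit behavior} computation, which only recovers the lower bound at the level of the limit, showing $v_{0}(t,x)\geq G(t,x)+\bar U(t,x)$ by integrating the subsolution inequality along a deterministic path in the variational formula (\ref{Eq:MainLogarithmicTerm}). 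Your verification-type argument is more elementary and, pleasingly, already non-asymptotic: it yields $-\epsilon\ln Q^{\epsilon}(t,x;\bar u)\geq \bar U(t,x)+G^{\epsilon}(t,x)-\epsilon C_{0}$ with $C_{0}=\tfrac12\sup|a:\nabla_{x}^{2}\bar U|\,(T-t)$, which is precisely a crude upper bound for the correction term $v_{1}$ in (\ref{Eq:Psi1forFirstOrderApproximation0}) that the rest of the paper computes exactly; what the asymptotic expansion buys over your constant is the sharp value of that $O(1)$ prefactor along the optimal trajectory, while what the weak convergence proof of \cite{DupuisSpiliopoulosWang} buys is applicability when $\bar U$ is only piecewise classical (mollified subsolutions) and when $h$ is not bounded and continuous, as in the exit probability example of Section \ref{S:Example}. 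Your closing remark about localization is apt but not needed here, since Condition \ref{A:MainAssumption2} already makes $\bar u$ bounded.
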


Since $\bar{U}$ is a subsolution, we get that $\bar{U}(s,y)\leq G(s,y)$ everywhere. This implies that  the scheme is asymptotically optimal if $\bar
{U}(t,x)=G(t,x)$ at the starting point $(t,x)$. Hence, any subsolution with value at the origin $(t,x)$ such that
$$0\ll\bar{U}(t,x)\leq G(t,x) $$ will have better asymptotic performance than that of standard Monte Carlo which corresponds to the zero subsolution.

\section{Non-asymptotic performance of a given change of measure}\label{S:PrelimitBehavior}

In Section \ref{S:AsymptoticPerformance}, we recalled the construction of asymptotically optimal importance sampling schemes for small noise diffusion processes. As we saw there, under regularity conditions, a change of measure based on the solution or on a subsolution to the associated HJB equation will be asymptotically optimal in that the bound of Theorem \ref{T:UniformlyLogEfficientRegime1} is achieved. However notice that this is a logarithmic bound  based on a large deviations analysis. This means that the pre-exponential terms are ignored. Nevertheless, the simulation is done for a fixed $\epsilon>0$ and how small $\epsilon$ should be so that the logarithmic asymptotics dominate the performance may depend on the particular problem. This is exactly the situation that we want to study. Performing an asymptotic expansion of the second moment $Q^{\epsilon}(t,x;\bar{u})$ in terms of $\epsilon$ we quantify this relation.

We proceed as follows. Consider $\bar{u}(t,x):[0,T]\times\mathbb{R}^{d}\mapsto \mathbb{R}^{d}$ to be a given sufficiently smooth function and consider the change of measure given by (\ref{Eq:ChangeOfMeasure}).  We start with the following key result that characterizes the second moment $Q^{\epsilon}(t,x;\bar{u})$ as a solution to a PDE.

For a function $f\in\mathcal{C}^{2}(\mathbb{R}^{d})$ we define the operator
\begin{equation*}
\mathcal{L}^{\epsilon}_{\bar{u}}f(x)=\left<\left(b(x)-\sigma(x)\bar{u}(t,x)\right),\nabla_{x} f(x)\right>+\frac{\epsilon}{2}\sigma(x)\sigma^{T}(x):\nabla^{2}_{x}f(x)
\end{equation*}

\begin{theorem}\label{T:CharacterizationPDE}
Let $\epsilon>0$ and assume Condition \ref{A:MainAssumption}. Consider $h:\mathbb{R}^{d}\mapsto\mathbb{R}$ to be continuous and bounded and $\bar{u}:[0,T]\times\mathbb{R}^{d}\mapsto\mathbb{R}^{d}$ to be uniformly bounded, continuous in $t$ and Lipschitz continuous in $x$. Assume that the following equation has a unique, $\mathcal{C}^{1,2}\left([0,T]\times \mathbb{R}^{d}\right)$ bounded solution
\begin{eqnarray}
& &\Phi^{\epsilon}_{t}(t,x)+\mathcal{L}^{\epsilon}_{\bar{u}}\Phi^{\epsilon}(t,x)+\frac{1}{\epsilon}\left\Vert\bar{u}(t,x)\right\Vert^{2}\Phi^{\epsilon}(t,x)=0, (t,x)\in[0,T)\times\mathbb{R}^{d}\nonumber\\
& &\hspace{2cm}\Phi^{\epsilon}(T,x)=e^{-\frac{2}{\epsilon}h(x)}, x\in\mathbb{R}^{d}\label{Eq:PDE2ndMnt}
\end{eqnarray}
and that the derivative of $\Phi^{\epsilon}(t,x)$ up to second order with respect to $x$ and of first order with respect to $t$ are bounded and continuous in $(t,x)\in[h,T]\times\mathbb{R}^{d}$ for every $h\in(0,T)$.
%where $\mathcal{L}\Phi^{\epsilon}(t,x)=\left<\left(b(x)-\sigma(x)\bar{u}(t,x)\right),\nabla_{x} \Phi^{\epsilon}(t,x)\right>+\frac{\epsilon}{2}\sigma(x)\sigma^{T}(x):\nabla^{2}_{x}\Phi^{\epsilon}(t,x)$.
Then we have that,  $\Phi^{\epsilon}(t,x)=Q^{\epsilon}(t,x;\bar{u})$.
\end{theorem}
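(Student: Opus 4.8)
The plan is to identify $Q^{\epsilon}(t,x;\bar{u})$ with a conditional expectation under the \emph{original} measure $\mathbb{P}$, and then apply the Feynman--Kac formula to recognize that expectation as the solution of (\ref{Eq:PDE2ndMnt}). First I would rewrite the second moment. Starting from (\ref{Eq:2ndMoment1}) and using that $\frac{d\mathbb{P}}{d\bar{\mathbb{P}}^{\epsilon}} = \left(\frac{d\bar{\mathbb{P}}^{\epsilon}}{d\mathbb{P}}\right)^{-1}$, a change of measure back to $\mathbb{P}$ gives
\[
Q^{\epsilon}(t,x;\bar{u}) = \mathbb{E}_{t,x}\left[\exp\left\{-\frac{2}{\epsilon}h\left(X^{\epsilon}(T)\right)\right\}\frac{d\mathbb{P}}{d\bar{\mathbb{P}}^{\epsilon}}(X^{\epsilon})\right],
\]
where now $X^{\epsilon}$ solves the \emph{original} SDE (\ref{Eq:Diffusion1}) under $\mathbb{P}$. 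Substituting the explicit Radon--Nikodym density from (\ref{Eq:ChangeOfMeasure}) and writing $dW(s) = \frac{1}{\sqrt{\epsilon}}(dX^{\epsilon}(s)-b(X^{\epsilon}(s))ds)$, I would express the likelihood ratio as an exponential of an additive functional of the path, of the form $\exp\{\frac{1}{2\epsilon}\int_{t}^{T}\|\bar{u}(s,X^{\epsilon}(s))\|^{2}ds - \frac{1}{\epsilon}\int_{t}^{T}\langle\bar{u}(s,X^{\epsilon}(s)), dX^{\epsilon}(s)-b(X^{\epsilon}(s))ds\rangle\}$. This is the standard algebra for reversing a Girsanov transformation; the uniform boundedness of $\bar{u}$ guarantees all exponential moments are finite, so the manipulations are legitimate.

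The second step is to recognize that the exponent still contains a stochastic integral $\int\langle\bar{u}, dX^{\epsilon}\rangle$, which is not in the classical Feynman--Kac form. The key trick is to define $\Psi^{\epsilon}(t,x) \doteq \Phi^{\epsilon}(t,x)\exp\{-\frac{1}{\epsilon}\langle \text{something}\rangle\}$ — more precisely, to \emph{change measure to} $\bar{\mathbb{P}}^{\epsilon}$ first, so that under $\bar{\mathbb{P}}^{\epsilon}$ the process $X^{\epsilon}$ has generator $\mathcal{L}^{\epsilon}_{\bar{u}}$ (drift $b-\sigma\bar{u}$, diffusion $\epsilon a$), and then the likelihood ratio $\left(\frac{d\mathbb{P}}{d\bar{\mathbb{P}}^{\epsilon}}\right)^{2}$ expressed via $\bar{W}$ becomes $\exp\{\frac{1}{2\epsilon}\int_{t}^{T}\|\bar{u}\|^{2}ds - \frac{2}{\sqrt{\epsilon}}\int_{t}^{T}\langle\bar{u}, d\bar{W}(s)\rangle\}$... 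Actually the cleaner route: keep the representation under $\bar{\mathbb{P}}^{\epsilon}$ and apply Itô's formula to $\Phi^{\epsilon}(s, X^{\epsilon}(s)) M^{\epsilon}(s)$, where $M^{\epsilon}(s)$ is the running multiplicative functional $\exp\{-\frac{1}{\epsilon}\int_{t}^{s}\langle \bar{u}, d\bar{W}\rangle \cdot(\ldots)\}$ built so that $\Phi^{\epsilon}(s,X^{\epsilon}(s))M^{\epsilon}(s)$ is a $\bar{\mathbb{P}}^{\epsilon}$-local martingale precisely when $\Phi^{\epsilon}$ satisfies (\ref{Eq:PDE2ndMnt}). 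Concretely, I would set $M^{\epsilon}(s) = \exp\{\frac{1}{2\epsilon}\int_{t}^{s}\|\bar{u}\|^{2}d\rho\}\cdot N^{\epsilon}(s)$ where $N^{\epsilon}$ absorbs the martingale part of $\left(\frac{d\mathbb{P}}{d\bar{\mathbb{P}}^{\epsilon}}\right)^{2}$; then Itô's formula on the product, using that $\Phi^{\epsilon}\in\mathcal{C}^{1,2}$ solves the PDE, makes the finite-variation part vanish, leaving a local martingale. A localization argument together with the boundedness of $\Phi^{\epsilon}$ and the uniform exponential integrability afforded by bounded $\bar{u}$ upgrades this to a true martingale on $[t,T]$. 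Equating expectations at times $t$ and $T$ — using $\Phi^{\epsilon}(T,\cdot)=e^{-2h/\epsilon}$ as the terminal condition and $M^{\epsilon}(t)=1$ — yields $\Phi^{\epsilon}(t,x) = \bar{\mathbb{E}}^{\epsilon}_{t,x}[e^{-\frac{2}{\epsilon}h(X^{\epsilon}(T))}M^{\epsilon}(T)] = Q^{\epsilon}(t,x;\bar{u})$.

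The main obstacle is the bookkeeping of the multiplicative functional $M^{\epsilon}(s)$: one must choose it so that (i) $M^{\epsilon}(T)$ reproduces exactly $(d\mathbb{P}/d\bar{\mathbb{P}}^{\epsilon})^{2}(X^{\epsilon})$ appearing in (\ref{Eq:2ndMoment1}), and (ii) the drift term generated by $d(\Phi^{\epsilon}(s,X^{\epsilon}(s))M^{\epsilon}(s))$ collapses to exactly $M^{\epsilon}(s)[\Phi^{\epsilon}_{t} + \mathcal{L}^{\epsilon}_{\bar{u}}\Phi^{\epsilon} + \frac{1}{\epsilon}\|\bar{u}\|^{2}\Phi^{\epsilon}]ds = 0$. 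Getting the factor of $\frac{1}{\epsilon}\|\bar{u}\|^{2}$ (rather than $\frac{1}{2\epsilon}\|\bar{u}\|^{2}$) right is where the \emph{squared} Radon--Nikodym derivative matters — the cross term between $dM^{\epsilon}$ and the martingale part of $d\Phi^{\epsilon}(s,X^{\epsilon}(s))$ contributes an extra $\frac{1}{2\epsilon}\|\bar{u}\|^{2}$ beyond the $\frac{1}{2\epsilon}\|\bar{u}\|^{2}$ already present in the $dM^{\epsilon}$ finite-variation term, and these combine to the required coefficient. The remaining technical points — justifying the localization (the $\mathcal{C}^{1,2}$ and boundedness hypotheses are tailored for this, with the regularity only away from $t=T$ handled by running the martingale argument on $[t+\delta,T]$ and letting $\delta\downarrow 0$ using dominated convergence and boundedness of $\Phi^{\epsilon}$), and checking the integrability of all stochastic integrals — are routine given Condition \ref{A:MainAssumption} and the uniform boundedness of $\bar{u}$.
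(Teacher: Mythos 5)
Your overall strategy---identify $Q^{\epsilon}$ with the expectation of $\Phi^{\epsilon}$ against a multiplicative functional and use It\^{o}'s formula to show that the relevant product is a martingale precisely because $\Phi^{\epsilon}$ solves (\ref{Eq:PDE2ndMnt})---is the same Feynman--Kac argument the paper uses; the difference is purely organizational. The paper first carries out the Girsanov manipulations up front to obtain the representation
\begin{equation*}
Q^{\epsilon}(t,x;\bar{u})=\mathbb{E}\exp\Big\{-\tfrac{2}{\epsilon}h\big(\bar{X}^{\epsilon}(T)\big)+\tfrac{1}{\epsilon}\int_{t}^{T}\big\Vert\bar{u}\big(s,\bar{X}^{\epsilon}(s)\big)\big\Vert^{2}ds\Big\},
\end{equation*}
where $\bar{X}^{\epsilon}$ is the process with \emph{reversed} drift $b-\sigma\bar{u}$ under the original measure $\mathbb{P}$; after that, the It\^{o} step is the textbook one applied to $e^{\frac{1}{\epsilon}\int_{t}^{s}\Vert\bar{u}\Vert^{2}}\Phi^{\epsilon}(s,\bar{X}^{\epsilon}(s))$, with a clean potential $\frac{1}{\epsilon}\Vert\bar{u}\Vert^{2}$ and no stochastic integral left in the exponent. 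Your ``cleaner route'' compresses these two steps into a single It\^{o} product computation under $\bar{\mathbb{P}}^{\epsilon}$, which is legitimate and ultimately equivalent.

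However, your sketch of that crucial computation contains two concrete errors. First, under $\bar{\mathbb{P}}^{\epsilon}$ the process $X^{\epsilon}$ has drift $b+\sigma\bar{u}$, \emph{not} $b-\sigma\bar{u}$: the generator of the sampled dynamics is not $\mathcal{L}^{\epsilon}_{\bar{u}}$. The reversed drift appearing in the PDE is exactly the nontrivial content of the theorem, and in your route it must be manufactured by the cross-variation term $d\langle\Phi^{\epsilon}(\cdot,X^{\epsilon}),M^{\epsilon}\rangle$. Second, the bookkeeping of the coefficient $\frac{1}{\epsilon}\Vert\bar{u}\Vert^{2}$ is misattributed. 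Writing $M^{\epsilon}(s)=\big(\tfrac{d\mathbb{P}}{d\bar{\mathbb{P}}^{\epsilon}}\big|_{\mathcal{F}_{s}}\big)^{2}=\exp\big\{-\tfrac{2}{\sqrt{\epsilon}}\int_{t}^{s}\langle\bar{u},d\bar{W}\rangle-\tfrac{1}{\epsilon}\int_{t}^{s}\Vert\bar{u}\Vert^{2}d\rho\big\}$, It\^{o}'s formula gives $dM^{\epsilon}=M^{\epsilon}\big(-\tfrac{2}{\sqrt{\epsilon}}\langle\bar{u},d\bar{W}\rangle+\tfrac{1}{\epsilon}\Vert\bar{u}\Vert^{2}ds\big)$: the entire zeroth-order coefficient $\frac{1}{\epsilon}\Vert\bar{u}\Vert^{2}$ comes from the quadratic variation of the \emph{squared} exponential martingale ($\tfrac{1}{2}\cdot\tfrac{4}{\epsilon}-\tfrac{1}{\epsilon}=\tfrac{1}{\epsilon}$), while the cross term contributes the first-order correction $-2\langle\sigma(x)\bar{u},\nabla_{x}\Phi^{\epsilon}\rangle\,ds$ that converts $b+\sigma\bar{u}$ into $b-\sigma\bar{u}$; it contributes no $\Vert\bar{u}\Vert^{2}\Phi^{\epsilon}$ term at all. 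With these two corrections the drift of $\Phi^{\epsilon}M^{\epsilon}$ does collapse to $M^{\epsilon}\big[\Phi^{\epsilon}_{t}+\mathcal{L}^{\epsilon}_{\bar{u}}\Phi^{\epsilon}+\tfrac{1}{\epsilon}\Vert\bar{u}\Vert^{2}\Phi^{\epsilon}\big]=0$ and your argument closes; the martingale/localization step is as routine as you indicate, given the boundedness hypotheses.
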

\begin{proof}
%By Lemma 4.3 in \cite{DupuisSpiliopoulosWang} we can equivalently write
We start by proving an equivalent representation for the second moment defined in (\ref{Eq:2ndMoment1}). In particular, we prove that
\begin{eqnarray}
Q^{\epsilon}(t,x;\bar{u})  &=&\bar{\mathbb{E}}^{\epsilon}\left[
\exp\left\{  -\frac{2}{\epsilon}h(X^{\epsilon}(T))\right\}  \left(
\frac{d\mathbb{P}}{d\bar{\mathbb{P}}^{\epsilon}}(X^{\epsilon})\right)
^{2}\right] \nonumber\\
&  =&\mathbb{E}\exp\left\{  -\frac{2}{\epsilon}h(\bar{X}^{\epsilon}%
(T))+\frac{1}{\epsilon}\int_{t}^{T}\left\Vert \bar{u}\left(  s,\bar
{X}^{\epsilon}(s)\right)  \right\Vert
^{2}ds\right\}\label{Eq:SecondMoment_representation}.
\end{eqnarray}
 where  $\bar{X}^{\epsilon}$ satisfies $\bar{X}^{\epsilon}(t)=x$ and
\begin{equation*}
d\bar{X}^{\epsilon}(s)   =\left[b  \left(\bar{X}^{\epsilon}(s)\right)-\sigma  \left(\bar{X}^{\epsilon}(s)\right)\bar{u}\left(  s,\bar
{X}^{\epsilon}(s)\right)  \right]ds+   \sqrt{\epsilon}\sigma  \left(\bar{X}^{\epsilon}(s)\right)dW(s).
\end{equation*}

Due to our assumptions $\bar{u}(s,x)$ is uniformly bounded. Therefore,  Girsanov's
theorem implies that
\[
\frac{d\mathrm{Q}}{d\mathbb{P}}\doteq\exp\left\{  -\frac{1}{\sqrt{\epsilon}%
}\int_{t}^{T}\left\langle \bar{u}\left(  s,\bar
{X}^{\epsilon}(s)\right),dW(s)\right\rangle -\frac{1}{2\epsilon}\int
_{t}^{T}\left\Vert \bar{u}\left(  s,\bar
{X}^{\epsilon}(s)\right)\right\Vert ^{2}ds\right\}
\]
defines a new probability measure $\mathrm{Q}$ under which
\[
\hat{W}(s)=W(s)+\frac{1}{\sqrt{\epsilon}}\int_{t}^{s}\bar{u}\left(  \rho,\bar
{X}^{\epsilon}(\rho)\right)d\rho
\]
is a Brownian motion. For notational convenience we write $\bar{u}(s)=\bar{u}(s,X(s))$, even though the particular process $X$ may change from line to line depending on the measure under which it is considered. Therefore, $\bar{X}^{\epsilon}$ under $\mathbb{P}$ has the same
distribution as $X^{\epsilon}$ under $\mathrm{Q}$. This implies
\begin{align*}
\lefteqn{\mathbb{E}\exp\left\{  -\frac{2}{\epsilon}h(\bar{X}^{\epsilon
}(T))+\frac{1}{\epsilon}\int_{t}^{T}\left\Vert \bar{u}(s,\bar{X}^{\epsilon
}(s))\right\Vert ^{2}ds\right\}  }\\
&  =\mathbb{E}^{\mathrm{Q}}\exp\left\{  -\frac{2}{\epsilon}h(X^{\epsilon
}(T))+\frac{1}{\epsilon}\int_{t}^{T}\Vert \bar{u}(s)\Vert^{2}ds\right\} \\
&  =\mathbb{E}\exp\left\{  -\frac{2}{\epsilon}h(X^{\epsilon}(T))+\frac
{1}{2\epsilon}\int_{t}^{T}\Vert \bar{u}(s)\Vert^{2}ds-\frac{1}{\sqrt{\epsilon}}%
\int_{t}^{T}\left\langle \bar{u}(s),dW(s)\right\rangle \right\}\\
&= \lefteqn{\bar{\mathbb{E}}^{\epsilon}\exp\left\{  -\frac{2}{\epsilon
}h(X^{\epsilon}(T))+\frac{1}{\epsilon}\int_{t}^{T}\Vert \bar{u}(s)\Vert^{2}%
ds-\frac{2}{\sqrt{\epsilon}}\int_{t}^{T}\left\langle \bar{u}(s),dW(s)\right\rangle
\right\}  }\\
&  =\bar{\mathbb{E}}^{\epsilon}\left[  \exp\left\{  -\frac{2}{\epsilon
}h(X^{\epsilon}(T))\right\}  \left(  \frac{d\mathbb{P}}{d\bar{\mathbb{P}%
}^{\epsilon}}(X^{\epsilon})\right)  ^{2}\right].\rule{4cm}{0cm}%
\end{align*}
which completes the proof of the equivalent representation for the second moment.

With expression (\ref{Eq:SecondMoment_representation}) at hand, the result follows by applying It\^{o}'s formula to the process $Y_{s}=e^{\frac{1}{\epsilon}\int_{t}^{s}\left\Vert\bar{u}(r,\bar{X}(r))\right\Vert^{2}dr}\Phi^{\epsilon}(s,\bar{X}(s))$. In particular, by
It\^{o} formula, see for example \cite{KaratzasShreve}, we have that
\begin{align}
&e^{\frac{1}{\epsilon}\int_{t}^{T}\left\Vert\bar{u}(r,\bar{X}^{\epsilon}(r))\right\Vert^{2}dr}\Phi^{\epsilon}(T,\bar{X}^{\epsilon}(T))=\Phi^{\epsilon}(t,x)+\nonumber\\
&+\int_{t}^{T}\sqrt{\epsilon}e^{\frac{1}{\epsilon}\int_{t}^{r}\left\Vert\bar{u}(\rho,\bar{X}^{\epsilon}(\rho))\right\Vert^{2}d\rho}\left<\nabla_{x} \Phi^{\epsilon}(r,\bar{X}^{\epsilon}(r)),\sigma(r,\bar{X}^{\epsilon}(r)) dW(r)\right>\nonumber\\
& +\int_{t}^{T}e^{\frac{1}{\epsilon}\int_{t}^{r}\left\Vert\bar{u}(\rho,\bar{X}^{\epsilon}(\rho))\right\Vert^{2}d\rho}\left[\Phi^{\epsilon}_{t}(r,\bar{X}^{\epsilon}(r))+\mathcal{L}_{\bar{u}}\Phi^{\epsilon}(r,\bar{X}^{\epsilon}(r))+\frac{1}{\epsilon}\left\Vert\bar{u}(r,\bar{X}^{\epsilon}(r))\right\Vert^{2}\Phi^{\epsilon}(r,\bar{X}^{\epsilon}(r))\right]dr\nonumber
\end{align}
By our assumptions, the stochastic integral term is a square integrable martingale and has mean zero. Thus, taking expected value
and  recalling that $\Phi^{\epsilon}$ satisfies (\ref{Eq:PDE2ndMnt}), we conclude the proof of the theorem.
\end{proof}

It is convenient to consider the function
$\Psi^{\epsilon}(t,x)=-\epsilon \log \Phi^{\epsilon}(t,x)$. It is easy
to see that $\Psi^{\epsilon}$ satisfies the equation
\begin{align}
&\Psi^{\epsilon}_{t}(t,x)+\mathcal{L}^{\epsilon}_{\bar{u}}\Psi^{\epsilon}(t,x)-\frac{1}{2}\left\Vert \sigma^{T}(x)\nabla_{x}\Psi^{\epsilon}(t,x)\right\Vert^{2}-\left\Vert\bar{u}(t,x)\right\Vert^{2}=0, (t,x)\in[0,T)\times\mathbb{R}^{d}\nonumber\\
&\hspace{2cm}\Psi^{\epsilon}(T,x)=2h(x), x\in\mathbb{R}^{d}\label{Eq:PDE2ndMntPsi}
\end{align}

Since $\left\Vert \bar{u}\right\Vert$ and $h$ are assumed to be bounded, the maximum principle for parabolic PDE's implies that $|\Psi^{\epsilon}(t,x)|\leq C$ uniformly in $(t,x)\in[0,T]\times\mathbb{R}^{d}$ and $\epsilon\in(0,1)$. In fact one can show, as in \cite{FlemingSoner}, that uniformly in compact subsets of $[0,T]\times\mathbb{R}^{d}$, we have that $\Psi^{\epsilon}\rightarrow v_{0}$, where $v_{0}$ is the unique bounded, continuous, viscosity solution of the Hamilton-Jacobi-Bellman equation
\begin{align}
&\partial_{t}v_{0}+\left<\left(b(x)-\sigma(x)\bar{u}(t,x)\right),\nabla_{x} v_{0}\right>-\frac{1}{2}\left\Vert \sigma^{T}(x)\nabla_{x}
v_{0}(t,x)\right\Vert^{2}-\left\Vert\bar{u}(t,x)\right\Vert^{2}=0, (t,x)\in[0,T)\times\mathbb{R}^{d}\nonumber\\
&\hspace{2cm}v_{0}(T,x)=2h(x), x\in\mathbb{R}^{d}\label{Eq:LDPterm}
\end{align}

We want to derive an asymptotic expansion for $\Psi^{\epsilon}(t,x)$ in terms of $\epsilon>0$. This will give an asymptotic expansion for $\Phi^{\epsilon}(t,x)$. At this point, we recall the notion of a region of strong regularity with respect to a vector $\beta$; see for example \cite{FlemingJames1992}.

\begin{definition}\label{Def:RSR}
We say that $N$ is a region of strong regularity with respect to a vector $\beta=\beta(s,x)$ with $(s,x)\in[0,T]\times\mathbb{R}^{d}$ if the following hold.
\begin{enumerate}
\item{$N$ is an open subset of $[0,T)\times\mathbb{R}^{d}.$}
\item{$\beta(s,\cdot)$ is Lipschitz for each fixed $s\in[0,T].$}
\item{Consider the differential equation
\begin{equation*}
\dot{y}(s)=\beta(s,y(s)), s\in[t,T) \textrm{ and }y(t)=x
\end{equation*}
Clearly $y(s)=y(s;t,x)$. For $(t,x)\in N$ define $\sigma(t,x)=\inf\left\{s>t: (s,y(s))\notin N\right\}$. Let us also define $z(t,x)=(\sigma(t,x),y(\sigma(t,x);t,x))$ and $\gamma(t,x)=\left\{(s,y(s)): t\leq s\leq\sigma(t,x)\right\}$.  Define the set $\Gamma=\left\{z(t,x):(t,x)\in N\right\}$.

Then, $\partial N=\Gamma\bigcup\Delta$, where $\Gamma$ is relatively open subset of $\partial N$, a $\mathcal{C}^{\infty}$ manifold, such that $\gamma(t,x)$ crosses $\Gamma$   nontangentially.

}
\end{enumerate}
\end{definition}

Assume the formal asymptotic expansion
\begin{equation*}
\Psi^{\epsilon}(t,x)=v_{0}(t,x)+\epsilon
v_{1}(t,x)+\epsilon^{2}v_{2}(t,x)+\cdots+\epsilon^{n}v_{n}(t,x)+O(\epsilon^{n})
\end{equation*}

In regions of strong regularity one can prove that this expansion is valid. In particular, we have the following theorem.

\begin{theorem}\label{T:RSR}
Assume that $h\in \mathcal{C}_{b}\left(\mathbb{R}^{d}\right)\bigcap\mathcal{C}^{\infty}\left(\mathbb{R}^{d}\right)$ and $\left\Vert \bar{u}\right\Vert\in \mathcal{C}_{b}\left([0,T]\times\mathbb{R}^{d}\right)\bigcap\mathcal{C}^{\infty}\left([0,T]\times\mathbb{R}^{d}\right)$. Let $N$ be a region of strong regularity with respect to $$\beta(s,x)=b(x)-\sigma(x)\bar{u}(s,x)-a(x)\nabla_{x} v_{0}(s,x)$$ such that $v_{0}\in\mathcal{C}^{\infty}(\bar{N})$. Then for every $n=0,1,2,\cdots$ we have uniformly on compact subsets of $N\bigcup\left(\partial N\bigcap \left(\{T\}\times\mathbb{R}^{d}\right)\right)$ that

\begin{equation*}
\Psi^{\epsilon}(t,x)=v_{0}(t,x)+\epsilon
v_{1}(t,x)+\epsilon^{2}v_{2}(t,x)+\cdots+\epsilon^{n}v_{n}(t,x)+O(\epsilon^{n})
\end{equation*}
as $\epsilon\downarrow 0$. The main term in the asymptotic expansion, $v_{0}$ satisfies (\ref{Eq:LDPterm}). Moreover,
\begin{align}
&
\partial_{t}v_{1}+\left<\left(b(x)-\sigma(x)\bar{u}(t,x)\right),\nabla_{x} v_{1}\right>+\frac{1}{2} \alpha(x):\nabla^{2}_{x}v_{0}-\left<\sigma^{T}(x)\nabla_{x} v_{0},\sigma^{T}(x)\nabla_{x} v_{1}\right>=0, (t,x)\in N\nonumber\\
& \hspace{2cm}v_{1}^{\epsilon}(T,x)=0, (T,x)\in \left(\partial N\bigcap \left(\{T\}\times\mathbb{R}^{d}\right)\right)\nonumber
\end{align}

and for every $n\geq 2$
\begin{align}
&
\partial_{t}v_{n}+\left<\left(b(x)-\sigma(x)\bar{u}(t,x)\right),\nabla_{x} v_{n}\right>+\frac{1}{2} \alpha(x):\nabla^{2}_{x}v_{n-1}-\sum_{i+j=n,0\leq i<j}\left<\sigma^{T}(x)\nabla_{x} v_{i},\sigma^{T}(x)\nabla_{x} v_{j}\right>-\nonumber\\
& \hspace{3cm}-\frac{1}{2}\left\Vert \sigma^{T}(x) \nabla_{x}
v_{n/2}\right\Vert^{2}\chi_{\{n/2 \textrm{ is integer}\}}=0, (t,x)\in N\nonumber\\
& \hspace{2cm}v_{n}^{\epsilon}(T,x)=0, (T,x)\in \left(\partial N\bigcap \left(\{T\}\times\mathbb{R}^{d}\right)\right)\nonumber
\end{align}
\end{theorem}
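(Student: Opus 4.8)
The plan is to substitute the formal asymptotic series $\Psi^{\epsilon}=\sum_{k\geq 0}\epsilon^{k}v_{k}$ into the PDE \eqref{Eq:PDE2ndMntPsi}, collect powers of $\epsilon$ to identify the equations for each $v_{k}$, and then justify the expansion rigorously inside a region of strong regularity by the method of characteristics together with a remainder estimate. First I would carry out the formal computation: writing $\mathcal{L}^{\epsilon}_{\bar{u}}=\langle b-\sigma\bar u,\nabla_x\cdot\rangle+\tfrac{\epsilon}{2}a:\nabla^{2}_x\cdot$ (with $a=\sigma\sigma^{T}$), the term $-\tfrac12\|\sigma^{T}\nabla_x\Psi^{\epsilon}\|^{2}$ expands as $-\tfrac12\sum_{i,j}\epsilon^{i+j}\langle\sigma^{T}\nabla_x v_i,\sigma^{T}\nabla_x v_j\rangle$, and the second-order term contributes $\tfrac{\epsilon}{2}\sum_k\epsilon^{k}\,a:\nabla^{2}_x v_k$. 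Matching the $\epsilon^{0}$ coefficient gives exactly \eqref{Eq:LDPterm} for $v_{0}$; matching $\epsilon^{1}$ gives the stated transport equation for $v_{1}$, where the coefficient of $\nabla_x v_1$ in the first-order part is $b-\sigma\bar u-a\nabla_x v_0=\beta(t,x)$ once one combines $\langle b-\sigma\bar u,\nabla_x v_1\rangle$ with the cross term $-\langle\sigma^{T}\nabla_x v_0,\sigma^{T}\nabla_x v_1\rangle$; and matching $\epsilon^{n}$ for $n\geq 2$ produces the general recursion, the factor $\chi_{\{n/2\ \textrm{is integer}\}}$ accounting for the diagonal term $i=j=n/2$ that would otherwise be double counted in the sum over $i+j=n$, $0\leq i<j$. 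The terminal conditions follow from $\Psi^{\epsilon}(T,\cdot)=2h$, which forces $v_0(T,\cdot)=2h$ and $v_k(T,\cdot)=0$ for $k\geq 1$.

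Next I would establish solvability of these equations on $N$. Each equation for $v_k$, $k\geq 1$, is linear first-order in $v_k$ once $v_0,\dots,v_{k-1}$ are known: along the characteristic curves $\dot y(s)=\beta(s,y(s))$ it reduces to an ODE $\tfrac{d}{ds}v_k(s,y(s))=F_k(s,y(s))$ where $F_k$ depends only on lower-order terms and their first and second $x$-derivatives. Because $v_0\in\mathcal{C}^{\infty}(\bar N)$ by hypothesis and $h,\|\bar u\|$ are smooth, an induction shows each $v_k\in\mathcal{C}^{\infty}(\bar N)$: the characteristic flow of $\beta$ is smooth, the curves $\gamma(t,x)$ cross $\Gamma$ nontangentially so the exit time $\sigma(t,x)$ and exit point $z(t,x)$ depend smoothly on $(t,x)$, and on the portion of $\partial N$ lying in $\{T\}\times\mathbb{R}^{d}$ the terminal data is smooth; integrating $F_k$ along characteristics and propagating the terminal/boundary data then yields a smooth $v_k$. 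I would invoke exactly the region-of-strong-regularity machinery referenced in \cite{FlemingJames1992} here rather than rederiving it.

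Finally I would prove the error bound. Define the truncated expansion $\Psi^{\epsilon}_{n}=\sum_{k=0}^{n}\epsilon^{k}v_{k}$ and plug it into the nonlinear operator in \eqref{Eq:PDE2ndMntPsi}; by construction of the $v_k$ all powers of $\epsilon$ up to $\epsilon^{n}$ cancel, so $\Psi^{\epsilon}_{n}$ is an approximate solution with residual $R^{\epsilon}_n=O(\epsilon^{n+1})$ uniformly on compact subsets of $N\cup(\partial N\cap(\{T\}\times\mathbb{R}^d))$, and it matches the terminal data exactly on that boundary piece. Setting $w^{\epsilon}=\Psi^{\epsilon}-\Psi^{\epsilon}_{n}$, subtracting the two equations and using that the difference of the quadratic Hamiltonian terms is linear in $\nabla_x w^{\epsilon}$ with bounded (by the a priori gradient bounds, which also come from the regularity theory) coefficients, I get a linear parabolic equation for $w^{\epsilon}$ with a $1/\epsilon$-free zeroth order structure, source $O(\epsilon^{n+1})$, and zero terminal data on the relevant boundary; a maximum-principle / Feynman–Kac argument localized to the characteristic tube then gives $|w^{\epsilon}|=O(\epsilon^{n+1})=O(\epsilon^{n})$ on compacts, which is the claim (the stated $O(\epsilon^{n})$ is the weaker and hence sufficient conclusion). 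The main obstacle I anticipate is the last step: controlling $w^{\epsilon}$ up to the lateral boundary $\Gamma$ where one has no boundary data for the prelimit $\Psi^{\epsilon}$, which is precisely why the nontangential crossing in Definition \ref{Def:RSR} is needed — it ensures that characteristics exit $N$ through $\Gamma$ in finite time and that the domain of dependence of an interior compact set stays away from $\Gamma$ for $\epsilon$ small, so the exponential smallness of diffusion near $\Gamma$ makes the missing boundary contribution negligible; making this localization quantitative (rather than appealing to it heuristically) is the technical heart of the argument and is where I would lean most heavily on \cite{FlemingJames1992,FlemingSoner}.
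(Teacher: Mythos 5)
Your proposal is correct and follows essentially the same route as the paper, which simply notes that the equations for the $v_{k}$ arise from substituting the formal series into (\ref{Eq:PDE2ndMntPsi}) and matching powers of $\epsilon$, and defers the rigorous justification of the expansion on regions of strong regularity to Theorem 5.1 of \cite{FlemingJames1992}. Your formal matching (including the role of the diagonal term giving the $\chi_{\{n/2 \textrm{ is integer}\}}$ contribution, the terminal conditions, and the identification of the transport direction $\beta$) is accurate, and your sketch of the characteristics/remainder argument is precisely the Fleming--James machinery the paper invokes.
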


\begin{proof}
The proof is similar to that of Theorem 5.1 in \cite{FlemingJames1992} and thus omitted. The equations that are satisfied by the correction terms in the asymptotic expansion can be formally found by plugging in the asymptotic expansion in (\ref{Eq:PDE2ndMntPsi}) and matching powers of $\epsilon$,
$\epsilon^{\kappa}$, for $\kappa=0,1,2,\cdots$.
\end{proof}

We conclude this section with a few useful remarks. Under sufficient regularity, see for example Theorem I.5.1 in \cite{FlemingSoner}, the leading order term $v_{0}(t,x)$ can be written as the value function
to a variational problem. To be precise
\begin{eqnarray}
v_{0}(t,x)&=&\inf_{\phi\in\mathcal{AC}([t,T]):\phi(t)=x}\left\{\int_{t}^{T}\left[\frac{1}{2}\left\Vert
\dot{\phi}(s)-\left(b(\phi(s))-\sigma(\phi(s))\bar{u}(s,\phi(s))\right)\right\Vert^{2}_{a^{-1}(\phi(s))}\right.\right.\nonumber\\
& &\left.\left.\hspace{3cm}-\left\Vert\bar{u}(s,\phi(s))\right\Vert^{2}
\right]ds
+2 h(\phi(T))\right\}\label{Eq:MainLogarithmicTerm}
\end{eqnarray}

Also, notice that the equation for $\kappa=1$ is a transport equation. Thus, we can explicitly solve for the correction term in the asymptotic expansion $v_{1}$:
\begin{equation}
v_{1}(t,x)=\int_{t}^{T}\frac{1}{2} \alpha(\psi_{1}(s;x)):\nabla^{2}v_{0}(s,\psi_{1}(s;x))ds\label{Eq:Psi1forFirstOrderApproximation0}
\end{equation}
where $\psi_{1}(s;x)$ for $s\geq t$ solves the ODE
\begin{equation}
\dot{\psi}_{1}(s;x)=b(\psi_{1}(s;x))-\sigma(\psi_{1}(s;x))\bar{u}(s,\psi_{1}(s;x))-a(\psi_{1}(s;x))\nabla v_{0}(s,\psi_{1}(s;x)),
\psi_{1}(t;x)=x.\label{Eq:Psi1forFirstOrderApproximation}
\end{equation}

Note that in a similar fashion we can solve for the next lower terms of the formal asymptotic expansion, $v_{i}, i\geq 2$ as well.

\section{Recovering the known asymptotically optimal performance  results}\label{S:Limit behavior}
Theorem \ref{T:RSR} states that for small values of $\epsilon$, the theoretical performance of an importance sampling scheme, based on a given control $\bar{u}(\cdot)$ (equivalently change of measure) and for a fixed starting point in a region of strong regularity, can be approximated, under smoothness assumptions, by
\begin{equation}
\tilde{\Phi}^{\epsilon}(t,x)=\exp\left\{-\frac{1}{\epsilon}v_{0}(t,x)-\int_{t}^{T}\frac{1}{2} a(\psi_{1}(s;x)):\nabla^{2}v_{0}(s,\psi_{1}(s;x))ds\right\}\label{Eq:PrelimitPerformance}
\end{equation}
where $v_{0}(t,x)$ satisfies (\ref{Eq:LDPterm})  and $\psi_{1}(s;x)$ satisfies the ODE (\ref{Eq:Psi1forFirstOrderApproximation}).
\begin{remark}\label{R:Signs}
Clearly the sign of $v_{1}(t,x)$ is of vital performance for the performance of a given importance sampling change of measure and one would like it to be positive. However, the point is that the situation where the rate of decay $v_{0}(t,x)$ is positive but the correction term $v_{1}(t,x)$ is negative can happen, as the example in Section \ref{S:Example} shows. In such a case the two terms compete and the question of which terms dominates which depends on the relative size of $\frac{1}{\epsilon} v_{0}(t,x)$ and of the correction term $v_{1}(t,x)$.
\end{remark}

In this section we show that the leading order term $v_{0}(t,x)$ matches the asymptotically optimal bounds of Theorem \ref{T:UniformlyLogEfficientRegime1}, if $\bar{u}(t,x)$ is chosen to be the solution or a subsolution to the associated HJB equation (\ref{Eq:HJBequation1}).

Let us assume that the actual solution to the related HJB equation is smooth enough and consider the control $\bar{u}(t,x)=-\sigma^{T}(x)\nabla_{x}G(t,x)$.  Recalling that $$G(t,x)=\inf_{\phi\in\mathcal{AC}[t,T],\phi(t)=x}\left\{\int_{t}^{T}\frac{1}{2}\left\Vert
\dot{\phi}(s)-b(\phi(s))\right\Vert^{2}_{a^{-1}(\phi(s))}ds+h(\phi(T))\right\},$$
and that $G(t,x)$ satisfies (\ref{Eq:HJBequation1}), we obtain
\begin{eqnarray}
v_{0}(t,x)&=&\inf_{\phi\in\mathcal{AC}[t,T],\phi(t)=x}\left\{\int_{t}^{T}\left[\frac{1}{2}\left\Vert
\dot{\phi}(s)-\left(b(\phi(s))-\sigma(\phi(s))\bar{u}(s,\phi(s))\right)\right\Vert^{2}_{a^{-1}(\phi(s))}\right.\right.\nonumber\\
& &\hspace{4cm}\left.\left.
-\left\Vert\bar{u}(s,\phi(s))\right\Vert^{2}
\right]ds
+2h(\phi(T))\right\}\nonumber\\
&=&2G(t,x).\nonumber
\end{eqnarray}

This is the known asymptotically optimal decay rate for the second moment (compare with Theorem \ref{T:UniformlyLogEfficientRegime1} by taking $\bar{U}=G$).
Assuming enough smoothness of $G(t,x)$, the leading order correction term in the approximation, $v_{1}(t,x)$, takes the form (\ref{Eq:Psi1forFirstOrderApproximation0}) with $v_{0}(t,x)=2G(t,x)$

Due to possible non-smoothness of the solution to the HJB equation or because it may be very difficult (which is usually the case) to compute it, the theory of subsolutions has been
developed, see \cite{DupuisWang2,DupuisSpiliopoulosWang}. Let us now study how $v_{0}(t,x)$ looks like when $\bar{u}(t,x)=-\sigma^{T}(x)\nabla_{x}\bar{U}(t,x)$, where $\bar{U}(t,x)$ is a subsolution to the associated HJB equation according to Definition \ref{Def:ClassicalSubsolution} and subject to Condition \ref{A:MainAssumption2}. In this case, we cannot obtain a closed form expression for $v_{0}(t,x)$ as it was done in the case of $\bar{U}(t,x)=G(t,x)$. However, using the subsolution property we can get the asymptotic bound obtained in Theorem \ref{T:UniformlyLogEfficientRegime1}. In particular, given an arbitrary $\phi\in\mathcal{AC}([t,T])$ with
$\phi(t)=x$, the subsolution property implies that
\begin{align*}
& -\langle\dot{\phi}(s)-b(\phi(s)),\nabla_{x}\bar{U}(s,\phi(s))\rangle
-\frac{1}{2}\langle\nabla_{x}\bar{U}(s,\phi(s)),a(\phi(s))\nabla_{x}\bar
{U}(s,\phi(s))\rangle\nonumber\\
&\hspace{6cm}\geq -\partial_{t}\bar{U}(s,\phi(s))-\langle\nabla_{x}\bar{U}(s,\phi(s)),\dot{\phi}(s)\rangle\nonumber\\
&\hspace{6cm}=-\frac{d}{ds}\bar{U}(s,\phi(s))  \nonumber
\end{align*}
Integrating both sides on $[t,T]$ and using  the terminal condition $\bar{U}(T,x)\leq h(x)$, we get
\begin{align*}
& -\int_{t}^{T}\left(\langle\dot{\phi}(s)-b(\phi(s)),\nabla_{x}\bar{U}(s,\phi(s))\rangle
+\frac{1}{2}\langle\nabla_{x}\bar{U}(s,\phi(s)),a(\phi(s))\nabla_{x}\bar
{U}(s,\phi(s))\rangle\right)ds \geq\nonumber\\
&\hspace{8cm}\geq -h(\phi(T))+\bar{U}(t,x)
\end{align*}
Expanding the square in the expression for $v_{0}(t,x)$ in (\ref{Eq:MainLogarithmicTerm}) and using the last display, we indeed see that
\begin{equation*}
v_{0}(t,x)\geq G(t,x)+\bar{U}(t,x)
\end{equation*}
which, as expected, is the asymptotic lower bound obtained in Theorem \ref{T:UniformlyLogEfficientRegime1}. %Notice that we have obtained an inequality and not an equality.
\section{An illustrating example}\label{S:Example}

In this section, we present a simple example that demonstrates some of the issues mentioned in the introduction and can be explored via the analysis of Sections
\ref{S:PrelimitBehavior} and \ref{S:Limit behavior}. Consider the reversible process
\begin{equation}
dX^{\epsilon}(s)=-\nabla V\left(X^{\epsilon}(s)\right)ds+\sqrt{\epsilon}dW(s),
\hspace{0.2cm} X(0)=y\label{Eq:Diffusion2}
\end{equation}

Let us consider an open set $D$ and define $\tau_{\epsilon}$ to be the exit time of $X^{\epsilon}$ from $D$,
\[
\tau_{\epsilon}=\inf\{s>0:X^{\epsilon}(s)\notin D\}
\]
and we assume that $V\in\mathcal{C}^{2}(D)$ with bounded derivative for $x\in D$. We make the assumption now that the initial point $y\in D$ is the global minimum of $V(x)$ such that $\nabla V(y)=0$ and that $\nabla V(x)\neq 0$ for all $x\neq y$. Without loss of generality we further assume that $V(y)=0$. Thus the initial point $y$ is an asymptotically  stable equilibrium point for the stochastic dynamical system under consideration and the probability of going away from $y$ is a rare event probability. Let $0\leq \ell<L$, define $A_{\kappa}=\{x\in\mathbb{R}^{d}:V(x)=\kappa\}$ and
\[
D=\{x\in\mathbb{R}^{d}: \ell< V(x)<L\}.
\]

We want to estimate the probability
\[
  \mathbb{P}_{y}\left[X^{\epsilon} \text{ hits }A_{L} \text{ before hitting }A_{\ell} \text{ and before time } T \right].
\]

In this case we have that $h(x)=0$ if $x\in A_{L}$ and $h(x)=\infty$ if $x\in A_{\ell}$. The terminal cost $h(x)$ is neither bounded nor continuous, but as it is shown in \cite{DupuisSpiliopoulosWang} a smooth subsolution  will still satisfy Theorem \ref{T:UniformlyLogEfficientRegime1}. It is easy to see that $\bar{U}(x)=-2(V(x)-L)$ is a classical subsolution to the associated HJB equation, i.e., it satisfies Definition \ref{Def:ClassicalSubsolution}. Moreover, it is also easy to see that using the related control
\[
\bar{u}(t,x)=-\nabla \bar{U}(t,x)=2\nabla V(x)
\]
for the change of measure, results in essentially simulating under a new change of measure with reversed potential function in the domain of interest $D$. Let us analyze the performance of
such an importance sampling scheme, i.e., where under the new measure, the potential function has been reversed. This means that, under the new dynamics,
the rare event is no longer rare, because the dynamics do not push the trajectories towards the stable equilibrium point any more.

Based on Theorem \ref{T:UniformlyLogEfficientRegime1}, the lower bound characterizing the asymptotic performance  is $G(0,y)+\bar{U}(y)$, where
\[
G(0,y)=\inf_{\phi
\in\Lambda(0,y)}\left[  S_{0T}(\phi)+h(\phi(T))\right]  , \label{Eq:LDPTh}%
\]
where $S_{0T}(\phi)$ is given by (\ref{Eq:RateFunction}) and
\[
\Lambda(0,y)=\left\{  \phi\in\mathcal{C}([0,T]:\mathbb{R}^{d}):\phi
(0)=y,\phi(s)\in D\text{ for }s\in\lbrack 0,T],\phi(T)\in
\partial D\right\}.
\]

It is clear that $G(0,y)$ also depends on $T$. By the discussion of Section \ref{S:Limit behavior}, this means
\[
v_{0}(0,y)\geq G(0,y)+\bar{U}(y).
\]

It is shown in Chapter 4 of \cite{FW1}, as $T$ gets larger, $G(0,y)$ and $\bar{U}(y)$ get closer in value. Thus, by Theorem \ref{T:UniformlyLogEfficientRegime1} and for large enough $T$, the particular importance sampling scheme is asymptotically optimal.

However, it turns out that one can derive a non-asymptotic lower bound for the related second moment. The bound  indicates that the pre-exponential factors (correction terms) have significant contribution, sufficient to degrade the actual performance. The starting point to derive a lower bound for $\Phi^{\epsilon}(t,x)=Q^{\epsilon}(t,x;\bar{u})$ at the initial point $(t,x)=(0,y)$ with the control $\bar{u}(t,x)=2\nabla V(x)$ is the following stochastic control representation (we refer the reader to \cite{DupuisSpiliopoulosWang} for more details on the derivation and on the use of such representations in the asymptotic analysis of importance sampling schemes). Let $\mathcal{A}$ be the set of all $\mathfrak{F}_{t}-$progressively measurable $d-$dimensional processes $u\in\{u(s), s\in[0,T]\}$ that satisfy
\[
\mathbb{E}\int_{0}^{T}\left\Vert u(s)\right\Vert^{2}ds<\infty
\]
and for each fixed $\epsilon>0$ consider $\bar{X}^{\epsilon}$ to be the unique strong solution to the controlled SDE
\[
d\bar{X}^{\epsilon}(s)=\left[-\nabla V\left(\bar{X}^{\epsilon}(s)\right)+\left(u(s)-\bar{u}\left(\bar{X}^{\epsilon}(s)\right)\right)\right]ds+\sqrt{\epsilon}dW(s),
\hspace{0.2cm} \bar{X}(0)=y
\]
Then, with $\bar{\tau}_{\epsilon}$ denoting the first time of exit of $\bar{X}^{\epsilon}$ from the set $D$, we have the representation
\begin{equation}
\Psi^{\epsilon}(0,y)=-\epsilon\log Q^{\epsilon}(0,y;\bar{u})=\inf_{u\in\mathcal{A}}\mathbb{E}\left[\frac{1}{2}\int_{0}^{\bar{\tau}_{\epsilon}}\left\Vert u(s)\right\Vert^{2}ds-\int_{0}^{\bar{\tau}_{\epsilon}}\left\Vert \bar{u}\left(\bar{X}^{\epsilon}(s)\right)\right\Vert^{2}ds+\infty \chi_{\{T<\bar{\tau}_{\epsilon}\}}\right]\label{Eq:StoControlRep}
\end{equation}

Following similar argument as in \cite{DupuisSpiliopoulosZhou}, let us choose a particular admissible control $u(s)$ in the last display. Let $T$ be large and fix
 $0<K<T$ splitting the time interval into two parts $[0,T-K)$ and $[T-K,T]$. Set $u(s)=0$ for $s\in[0,T-K)$. It is easy to see that the resulting dynamics
of $\bar{X}$ process is stable for $s\in[0,T-K)$ and thus with high probability it will stay around the point $y$ for $s\in[0,T-K)$. In the second time interval,
 i.e., for $s\in[T-K,T]$ we choose a control that leads to escape prior to time $T$. The cost induced by such a control may depend on $K$, but not on $T$.
 Since, the representation in (\ref{Eq:StoControlRep}) takes the infimum over all $u\in\mathcal{A}$, the choice of a particular control results in an upper bound.
In particular, see also \cite{DupuisSpiliopoulosZhou}, we have  that there are positive constants $C_{1},C_{2}>0$, independent of $T$ or $\epsilon$, such that
\begin{equation}
Q^{\epsilon}(0,y;\bar{u})\geq e^{-\frac{1}{\epsilon}C_{1}+C_{2}(T-K)}.
\end{equation}

This lower bound on second moment indicates that if $T$ is large, one may need to go to considerably small values of  $\epsilon$, before the theoretically optimal
asymptotic performance is observed.
\section{Concluding remarks}\label{S:Conclusions}

The expression in  (\ref{Eq:PrelimitPerformance}) gives an approximation to the second moment of an unbiased estimator for a given change of measure for small but fixed $\epsilon>0$. It characterizes the leading order terms in the asymptotic expansion of the performance measure of an importance sampling estimator. Moreover, it shows that even in the case that the change of measure is chosen based on a subsolution, the correction term
$v_{1}(t,x)$ may dominate the main term of the asymptotics $-\frac{1}{\epsilon}v_{0}(t,x)$ degrading the performance for small but fixed values of $\epsilon>0$.
 A reason for that could be the consideration of big time horizon $T$ values.

Hence, for
certain choices of changes of measure, one may need to go to considerably smaller values of $\epsilon$ in order to observe good performance. An example where issues like these occur is studied in \cite{DupuisSpiliopoulosZhou}. As it is shown there the phenomenon of good theoretical asymptotic performance but poor actual performance is especially evident in problems with rest points.

The expression for the main correction term $v_{1}(t,x)$
 allows to compare importance sampling schemes  for small but fixed $\epsilon>0$. It also shows that the choice of change of measure does not only affect the leading order term in the asymptotics but also the correction terms. As it is demonstrated in the general framework considered here and shown in the specific example of Section \ref{S:Example}, the reason of poor actual performance of nevertheless theoretically asymptotically optimal changes of measures, lies in the behavior of the correction terms. The correction terms  can have significant contribution to the performance of the simulation algorithm, which is implemented for small but fixed non-zero strength of the noise. In this paper we have quantified in precise mathematical terms these issues.

\section{Acknowledgements}
The author was partially supported by the National Science Foundation (DMS 1312124).

%\bibliography{bibfile}

%%%%%%%%%%%%%%%%%%%%%%%%%%%%%%%%%%%%%%%%%%%%%%%%%%%%%%%%%%%%

\end{document}